\documentclass{amsart}
\usepackage[utf8]{inputenc}
\usepackage{latexsym,amssymb,amsmath,amsthm,amscd,graphicx}
\usepackage{amsfonts}
\usepackage{xcolor}
\usepackage{tikz}
\usepackage[english]{babel}
\usepackage[a4paper,bindingoffset=0.2in,%
            left=1in,right=1in,top=1in,bottom=1in,%
            footskip=.25in]{geometry}
\newtheorem{theorem}{Theorem}[section]

\newtheorem{remark}[theorem]{Remark}
\newtheorem{cor}[theorem]{Corollary}

\numberwithin{equation}{section}

\begin{document}

\title[On the proper inclusion property of (discrete) Morrey spaces]{On the proper 
inclusion property of (discrete) Morrey spaces}

\author[Y.I~Runtunuwu]{Yohanes Imanuel Runtunuwu}
\address{Faculty of Mathematics and Natural Sciences, Bandung Institute of Technology, Bandung 40132, Indonesia}
\email{20122004@mahasiswa.itb.ac.id}

\author[J.~Lindiarni]{Janny Lindiarni}
\address{Faculty of Mathematics and Natural Sciences, Bandung Institute of Technology, Bandung 40132, Indonesia}
\email{janny@itb.ac.id}

\author[H.~Gunawan]{Hendra Gunawan}
\address{Faculty of Mathematics and Natural Sciences, Bandung Institute of Technology, Bandung 40132, Indonesia}
\email{hgunawan@itb.ac.id}

\subjclass[2020]{42B35, 46A45, 46B45}

\keywords{Morrey spaces, discrete Morrey spaces, inclusion properties}

\begin{abstract}
In this paper, we construct a function which confirms the proper inclusion 
property of Morrey spaces, by using a relation between a class of functions in Morrey 
spaces and discrete Morrey spaces. Our particular function is simpler than those 
constructed by Gunawan \textit{et al.} in \cite{gunawan1, gunawan2}.
\end{abstract}

\maketitle

\section{Introduction}\label{intro}

Morrey spaces were first introduced by C.B. Morrey in 1938 in the study of the solution to
an elliptic partial differential equation \cite{morrey}. For $1\le p\le q<\infty$, the Morrey 
space $\mathcal{M}^p_q=\mathcal{M}^p_q(\mathbb{R}^d)$ consists of all functions $f\in 
L_{loc}^p(\mathbb{R}^d)$ such that 
$$
\left\|f\right\|_{\mathcal{M}_{q}^{p}}:=\sup_{a\in\mathbb{R}^d,r>0}|B(a,r)|^{\frac{1}{q}-\frac{1}{p}}\left(\int_{B(a,r)}|f(x)|^p dx\right)^{\frac{1}{p}}<\infty.
$$
As shown in \cite{sawano}, Morrey spaces satisfy the inclusion property: 
for $1\le p_1\le p_2\le q<\infty$, we have $\mathcal{M}_{q}^{p_2} \subseteq 
\mathcal{M}_{q}^{p_1}$, with $\|f\|_{\mathcal{M}_q^{p_1}}\le\|f\|_{\mathcal{M}_q^{p_2}}$ 
whenever $f\in \mathcal{M}_q^{p_2}$.

In \cite{gunawan1}, the inclusion property is shown to be proper for $1\le p_1<p_2\le q< 
\infty$, by constructing a function $f$ belonging to $\mathcal{M}_q^{p_1}$ which is not in 
$\mathcal{M}_q^{p_2}$. However, the function is constructed in a subtle way, by modifying the 
properties of power functions. For $d=1$, a different function which confirms the proper 
inclusion property of Morrey spaces may be obtained indirectly via the proper inclusion 
property of discrete Morrey spaces, which we shall discuss later.

In this paper, we shall confirm that, for $1\le p_1<p_2\le q<\infty$, the inclusion 
$\mathcal{M}^{p_2}_q(\mathbb{R}) \subset\mathcal{M}^{p_1}_q(\mathbb{R})$ is proper by 
constructing a simpler function belonging to $\mathcal{M}_q^{p_1}(\mathbb{R}) 
\setminus \mathcal{M}_q^{p_2}(\mathbb{R})$. We achieve this by using relation between 
a class of functions in Morrey spaces and discrete Morrey spaces.

Note that throughout the paper, the letters $C$ and $C_{p,q}$ denote some constants, which 
may have different values from line to line.

\section{Recent Results}

For $1\le p\le q<\infty$, the discrete Morrey Space $\ell_q^p=\ell_q^p(\mathbb{Z})$, which
was first introduced in \cite{gunawan3}, is defined to be the set of all sequences 
$x=(x_j)_{j\in\mathbb{Z}}$ such that
\begin{equation}\label{discrete1}
\|x\|_{\ell_q^p}:=\sup_{m\in\mathbb{Z},N\in\mathbb{N}_0}|S_{m,N}|^{\frac{1}{q}-\frac{1}{p}}
\left(\sum_{j\in S_{m,N}}|x_j|^p\right)^{\frac{1}{p}}<\infty,
\end{equation}
where $\mathbb{N}_0:=\mathbb{N}\cup \{0\},S_{m,N}:=\{m-N,...,m,...,m+N\},$ and 
$|S_{m,N}|=2N+1$. Notice that a sequence $x\in\ell^p_q$ must be bounded with $\|x\|_\infty
\le \|x\|_{\ell^p_q}$, for $1\le p\le q<\infty$. As shown in \cite{gunawan3}, 
discrete Morrey spaces also satisfy the inclusion property: for $1\le p_1\le p_2\le q<
\infty$, we have $\ell^{p_2}_q\subseteq \ell^{p_1}_q$, with $\|x\|_{\ell^{p_1}_q}\le 
\|x\|_{\ell^{p_2}_q}$ whenever $x\in\ell^{p_2}_q$. 

Later, it was shown in \cite{gunawan2} that, for $1\le p_1<p_2\le 
q<\infty$, the inclusion $\ell^{p_2}_q\subset \ell^{p_1}_q$ is proper by constructing a 
particular sequence in $\ell^{p_1}_q\setminus \ell^{p_2}_q$. To be precise, consider
the sequence $x=(x_j)_{j\in\mathbb{Z}}$ which is given by:
\begin{equation}\label{seq}
x_j:=\begin{cases}
        1,\text{ if $|j|=0,1,2,...,2^{w+v}$},\\
        1,\text{ if $|j|=2^{k(w+v)},2^{k(w+v)}-2^{kw},...,2^{k(w+v)}-2^{k(w+v-2)}$},\\
        0,\text{ otherwise,}
    \end{cases}
\end{equation}
where $k\in\mathbb{N}\cap[k_0,\infty), k_0$ be the smallest positive integer such that 
$1-\frac{1}{2^{2k_0}}>\frac{1}{2^{w+v-1}}$ and $v,w\in\mathbb{N}$ chosen such that
$$
\left(\frac{q}{p_2}-1\right)w+\frac{2q}{p_2}<v<\left(\frac{q}{p_1}-1\right)w+2.
$$ 
One may then observe that $x\in \ell_q^{p_1} \setminus \ell_q^{p_2}$ for $1\le p_1<p_2\le 
q<\infty$.

Using this result for discrete Morrey spaces, one can obtain an alternative function that
confirms the proper inclusion $\mathcal{M}^{p_2}_q(\mathbb{R})\subset 
\mathcal{M}^{p_1}_q(\mathbb{R})$ for $1\le p_1<p_2\le q<\infty$ via the following 
observation. For $x:=(x_j)_{j\in\mathbb{Z}}\in \ell^p_q$, let
\begin{equation}\label{embeding}
\tilde{x}(t):=\sum_{j\in\mathbb{Z}} x_j\chi_{[j,j+1)}(t),
\end{equation}
which defines a function on the real line. Then, we have the following result.

\begin{theorem}\label{equivalence}{\rm \cite{rizma}}
For $1\le p\le q<\infty$, the space $\ell^p_q$ may be embedded into the space 
$\mathcal{M}^p_q$ via the mapping (\ref{embeding}), where
$$
\|x\|_{\ell^p_q}\le \|\tilde{x}\|_{\mathcal{M}^p_q}\le C_{p,q}\|x\|_{\ell^p_q},
$$
with $C_{p,q}=5^{1/p-1/q}$.
\end{theorem}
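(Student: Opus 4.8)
The plan is to prove the two inequalities separately, exploiting the fact that $\tilde{x}$ is constant and equal to $x_j$ on each unit interval $[j,j+1)$, so that $\int_{[j,j+1)}|\tilde x(t)|^p\,dt=|x_j|^p$. The left inequality $\|x\|_{\ell^p_q}\le\|\tilde x\|_{\mathcal M^p_q}$ should come almost for free from an exact correspondence between discrete cubes and balls, while the right inequality will require a covering argument together with careful bookkeeping on the comparison of sizes, which is where the constant $C_{p,q}=5^{1/p-1/q}$ enters.

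For the lower bound, I would take an arbitrary discrete cube $S_{m,N}=\{m-N,\dots,m+N\}$ and match it to the ball $B(a,r)$ with center $a=m+\tfrac12$ and radius $r=N+\tfrac12$, so that $B(a,r)=(m-N,\,m+N+1)$ and hence $|B(a,r)|=2N+1=|S_{m,N}|$. Since this ball is exactly the union of the unit intervals indexed by $S_{m,N}$, we get $\int_{B(a,r)}|\tilde x|^p\,dt=\sum_{j\in S_{m,N}}|x_j|^p$, and therefore the discrete Morrey quantity for $(m,N)$ equals the continuous Morrey quantity for $(a,r)$. Taking the supremum over $m$ and $N$ and bounding each matched term by $\|\tilde x\|_{\mathcal M^p_q}$ yields $\|x\|_{\ell^p_q}\le\|\tilde x\|_{\mathcal M^p_q}$.

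For the upper bound, I would fix an arbitrary ball $B(a,r)=(a-r,a+r)$ and let $E:=\{\,j\in\mathbb Z:[j,j+1)\cap(a-r,a+r)\neq\emptyset\,\}$, so that $\int_{B(a,r)}|\tilde x|^p\,dt\le\sum_{j\in E}|x_j|^p$; note $E=\mathbb Z\cap(a-r-1,\,a+r)$ is a block of consecutive integers. I would then split into two regimes. When $2r\le1$, the pointwise bound $|\tilde x|\le\|x\|_\infty$ gives $\int_{B(a,r)}|\tilde x|^p\,dt\le 2r\,\|x\|_\infty^p$, and since $\|x\|_\infty\le\|x\|_{\ell^p_q}$ (as noted earlier) the Morrey quantity $|B(a,r)|^{1/q-1/p}\bigl(\int_{B(a,r)}|\tilde x|^p\,dt\bigr)^{1/p}$ is at most $(2r)^{1/q}\|x\|_{\ell^p_q}\le\|x\|_{\ell^p_q}$. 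When $2r>1$, I would enclose the block $E$ in a symmetric discrete cube $S_{m,N}$; since $E$ has at most $\lceil 2r+1\rceil$ elements and a symmetric cube has odd cardinality $2N+1$, one can arrange $|S_{m,N}|\le 5\,|B(a,r)|$. Using $\bigl(\sum_{j\in S_{m,N}}|x_j|^p\bigr)^{1/p}\le|S_{m,N}|^{1/p-1/q}\|x\|_{\ell^p_q}$ and collecting the powers of the sizes, the Morrey quantity for $B(a,r)$ is bounded by
\[
\Bigl(\tfrac{|S_{m,N}|}{|B(a,r)|}\Bigr)^{1/p-1/q}\|x\|_{\ell^p_q}\le 5^{1/p-1/q}\|x\|_{\ell^p_q}.
\]
Taking the supremum over all balls then gives $\|\tilde x\|_{\mathcal M^p_q}\le 5^{1/p-1/q}\|x\|_{\ell^p_q}$.

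I expect the main obstacle to be the uniform size comparison in the upper bound. The difficulty is twofold: a ball is centered at an arbitrary real point whereas a discrete cube $S_{m,N}$ is forced to be symmetric about an integer and to have odd cardinality $2N+1$, creating a half-integer misalignment; and for small radii the naive covering ratio $|S_{m,N}|/|B(a,r)|$ is unbounded, which is exactly why the regime $2r\le1$ must be handled by the $\ell^\infty$ estimate instead. The careful count of how many unit intervals a ball of radius $r$ can meet, together with the overhang of at most one unit on each side and the parity constraint on $|S_{m,N}|$, is what produces the explicit constant $5$ and hence $C_{p,q}=5^{1/p-1/q}$.
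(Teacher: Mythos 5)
Your proposal is correct: the exact matching of $S_{m,N}$ with the ball centered at $m+\tfrac12$ of radius $N+\tfrac12$ gives the lower bound, and the case split on $2r\le 1$ versus $2r>1$ with the $\ell^\infty$ bound for small radii and the covering ratio $|S_{m,N}|/|B(a,r)|\le 5$ for large radii (your count actually gives $\le 4$, so the stated constant holds with room to spare) yields the upper bound. The paper only cites this theorem to \cite{rizma} without reproducing the proof, but the strategy it sketches for the analogous Theorem \ref{teo2} --- estimating $(2r)^{1/q-1/p}\bigl(\int_{a-r}^{a+r}|\tilde{x}(t)|^p\,dt\bigr)^{1/p}$ by cases on the size of $r$ --- is essentially the same approach you take.
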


\begin{cor} \label{Cor}
For $1\le p_1<p_2<q<\infty$, let $\widetilde{x}(t)=\sum_{j\in\mathbb{Z}}x_j\chi_{[j,j+1)}(t)$
where $x=(x_j)_{j\in\mathbb{Z}}\in \ell^{p_1}_q\setminus \ell^{p_2}_q$. Then $\widetilde{x}\in\mathcal{M}_q^{p_1}\setminus\mathcal{M}_q^{p_2}.$
\end{cor}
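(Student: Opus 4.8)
The plan is to split the statement into two independent assertions---that $\widetilde{x}\in\mathcal{M}_q^{p_1}$ and that $\widetilde{x}\notin\mathcal{M}_q^{p_2}$---and to derive each from the two-sided norm comparison of Theorem \ref{equivalence}, used once with $p=p_1$ and once with $p=p_2$. This is legitimate because $p_1<p_2<q$, so both exponents meet the hypothesis $1\le p\le q<\infty$. The membership part is immediate: since $x\in\ell^{p_1}_q$, the quantity $\|x\|_{\ell^{p_1}_q}$ is finite, and the upper estimate $\|\widetilde{x}\|_{\mathcal{M}^{p_1}_q}\le C_{p_1,q}\|x\|_{\ell^{p_1}_q}$ then forces $\|\widetilde{x}\|_{\mathcal{M}^{p_1}_q}<\infty$, so $\widetilde{x}\in\mathcal{M}_q^{p_1}$.

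The non-membership part is the delicate one, and it is where I expect the only genuine obstacle to lie. We want $\|\widetilde{x}\|_{\mathcal{M}^{p_2}_q}=\infty$, and the natural tool is the lower estimate $\|x\|_{\ell^{p_2}_q}\le\|\widetilde{x}\|_{\mathcal{M}^{p_2}_q}$ of Theorem \ref{equivalence}. The issue is that the theorem is phrased for $x\in\ell^{p_2}_q$, whereas here $x\notin\ell^{p_2}_q$, so it cannot be quoted verbatim. I would resolve this by noting that the lower bound is really a pointwise comparison of the suprema defining the two norms, hence valid in the extended reals $[0,\infty]$ irrespective of finiteness. Concretely, to each window $S_{m,N}=\{m-N,\dots,m+N\}$ I would associate the ball $B\big(m+\tfrac12,\,N+\tfrac12\big)=(m-N,\,m+N+1)$, whose length equals $2N+1=|S_{m,N}|$; since $\widetilde{x}$ takes the constant value $x_j$ on each $[j,j+1)$, a direct check gives
$$
|B|^{\frac1q-\frac1{p_2}}\Big(\int_{B}|\widetilde{x}(t)|^{p_2}\,dt\Big)^{\frac1{p_2}}=|S_{m,N}|^{\frac1q-\frac1{p_2}}\Big(\sum_{j\in S_{m,N}}|x_j|^{p_2}\Big)^{\frac1{p_2}}.
$$
Taking the supremum over all $m\in\mathbb{Z}$ and $N\in\mathbb{N}_0$, the left side is dominated by the full supremum defining $\|\widetilde{x}\|_{\mathcal{M}^{p_2}_q}$, while the right side is exactly $\|x\|_{\ell^{p_2}_q}$.

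Consequently $\|x\|_{\ell^{p_2}_q}\le\|\widetilde{x}\|_{\mathcal{M}^{p_2}_q}$ holds unconditionally, and since $x\notin\ell^{p_2}_q$ means $\|x\|_{\ell^{p_2}_q}=\infty$, we obtain $\|\widetilde{x}\|_{\mathcal{M}^{p_2}_q}=\infty$, that is, $\widetilde{x}\notin\mathcal{M}_q^{p_2}$. Together with the first part this yields $\widetilde{x}\in\mathcal{M}_q^{p_1}\setminus\mathcal{M}_q^{p_2}$, as claimed. An equivalent way to package the delicate step is by contraposition: were $\widetilde{x}\in\mathcal{M}^{p_2}_q$, the same per-ball inequality would force $x\in\ell^{p_2}_q$, contradicting the hypothesis.
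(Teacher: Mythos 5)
Your proof is correct and takes essentially the same route as the paper: apply Theorem \ref{equivalence} with $p=p_1$ to get $\|\widetilde{x}\|_{\mathcal{M}^{p_1}_q}\le C_{p_1,q}\|x\|_{\ell^{p_1}_q}<\infty$, and with $p=p_2$ to get $\|\widetilde{x}\|_{\mathcal{M}^{p_2}_q}\ge\|x\|_{\ell^{p_2}_q}=\infty$. Your extra care in justifying the lower bound when $x\notin\ell^{p_2}_q$ (observing that the per-window comparison is pointwise and hence valid in the extended reals) addresses a point the paper's proof applies silently, but the underlying argument is identical.
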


\begin{proof}
For a sequence $x:=(x_j)_{j\in\mathbb{Z}}\in\ell_q^{p_1}\setminus \ell_q^{p_2}$ where $1\le p_1<p_2<q<\infty$, it follows from Theorem \ref{equivalence} that 
$$
\|\widetilde{x}\|_{\mathcal{M}_q^{p_2}}  \ge\|x\|_{\ell_q^{p_2}}=\infty,
$$
which means that $\widetilde{x}\notin \mathcal{M}_q^{p_2}$, and 
$$
\|\widetilde{x}\|_{\mathcal{M}_q^{p_1}} \le C\|x\|_{\ell_q^{p_1}}<\infty,
$$
which means that $\widetilde{x}\in \mathcal{M}_q^{p_1}$. 
Thus, $\widetilde{x}\in\mathcal{M}_q^{p_1}\setminus\mathcal{M}_q^{p_2},$ as claimed.
\end{proof}

Now, as one would realize, the space $\ell^p_q$ with the norm defined in (\ref{discrete1}) is
rather loose since the sets $S_{m,N}$ have always an odd cardinality. In this paper, we 
shall define $\ell_q^p$, for $1\le p\le q<\infty$,  to be the set of all sequences $x=
(x_j)_{j\in\mathbb{Z}}$ for which
\begin{equation}\label{discrete2}
\|x\|^*_{\ell_q^p}
    :=\sup_{k\in\mathbb{Z},n\in N_0} |S^*_{k,n}|^{\frac{1}{q}-\frac{1}{p}}\left(\sum_{j\in S^*_{k,n}}|x_j|^p\right)^{\frac{1}{p}}<\infty,
\end{equation}
where $\mathbb{N}_0:=\mathbb{N}\cup\{0\},S^*_{k,n}:=\{k,...,k+n\},$ and $|S^*_{k,n}|=n+1$,
for $k\in\mathbb{Z}$ and $n\in\mathbb{N}_0$. 
Here $S^*_{k,n}$ may have an odd or even cardinality.

It is clear that the earlier norm $\|\cdot\|_{\ell^p_q}$ is dominated by the later 
$\|\cdot\|^*_{\ell^p_q}$. Conversely, one can find a constant $C>1$ such that 
$\|\cdot\|^*_{\ell^p_q}\le C\|\cdot\|_{\ell^p_q}$. Precisely, we have the following 
theorem.

\begin{theorem} \label{teo1} \cite{iconmaa}
Let $1\le p\le q<\infty$. Then, the following relations  
$$
\|x\|_{\ell_q^p}\le\|x\|^*_{\ell_q^p}\le C_{p,q}\|x\|_{\ell_q^p},\quad x\in\ell^p_q,
$$
hold for $C_{p,q}=\left(\frac{3}{2}\right)^{1/p-1/q}$.
\end{theorem}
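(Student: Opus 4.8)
The plan is to prove the two inequalities separately, the left one being essentially trivial and the right one requiring a short parity argument. For the left inequality $\|x\|_{\ell_q^p}\le\|x\|^*_{\ell_q^p}$, I would simply observe that every set $S_{m,N}=\{m-N,\dots,m+N\}$ coincides with $S^*_{m-N,2N}$, so each term appearing in the supremum defining $\|x\|_{\ell_q^p}$ also appears in the (larger) collection of terms defining $\|x\|^*_{\ell_q^p}$. Taking suprema then gives the inequality with constant $1$.

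For the right inequality, the strategy is to fix an arbitrary interval $S^*_{k,n}=\{k,\dots,k+n\}$ of cardinality $n+1$ and bound its associated term by $C_{p,q}\|x\|_{\ell_q^p}$, splitting into two cases according to the parity of $n$. If $n$ is even, then $|S^*_{k,n}|=n+1$ is odd and $S^*_{k,n}=S_{m,N}$ with $m=k+n/2$, $N=n/2$; the term is therefore already bounded by $\|x\|_{\ell_q^p}$ with constant $1$. The substantive case, and the main obstacle, is when $n$ is odd, so that $|S^*_{k,n}|=n+1$ is even and the interval is genuinely not of the form $S_{m,N}$.

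In the odd case I would enclose $S^*_{k,n}$ in the smallest odd-cardinality interval $S':=\{k,\dots,k+n+1\}$, which equals $S_{m,N}$ with $N=(n+1)/2$ (an integer, since $n$ is odd) and $m=k+(n+1)/2$. Because $S^*_{k,n}\subseteq S'$, the inner $p$-sum can only grow when passing from $S^*_{k,n}$ to $S'$, while the outer factor $(n+1)^{1/q-1/p}$ exceeds $(n+2)^{1/q-1/p}=|S'|^{1/q-1/p}$ since the exponent $1/q-1/p\le0$. Absorbing this discrepancy into an explicit prefactor, one obtains
\begin{equation*}
|S^*_{k,n}|^{\frac{1}{q}-\frac{1}{p}}\Bigl(\sum_{j\in S^*_{k,n}}|x_j|^p\Bigr)^{\frac{1}{p}}\le\Bigl(\tfrac{n+2}{n+1}\Bigr)^{\frac{1}{p}-\frac{1}{q}}|S'|^{\frac{1}{q}-\frac{1}{p}}\Bigl(\sum_{j\in S'}|x_j|^p\Bigr)^{\frac{1}{p}}\le\Bigl(\tfrac{n+2}{n+1}\Bigr)^{\frac{1}{p}-\frac{1}{q}}\|x\|_{\ell_q^p}.
\end{equation*}

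It then remains to maximise the prefactor over admissible $n$. Since $\tfrac{n+2}{n+1}=1+\tfrac{1}{n+1}$ is decreasing in $n$ and the exponent $1/p-1/q$ is nonnegative, the worst case is the smallest odd value $n=1$, i.e.\ the length-$2$ interval, which yields precisely the sharp constant $(3/2)^{1/p-1/q}$. Combining both parities, and noting $(3/2)^{1/p-1/q}\ge1$, I would finish by taking the supremum over all $k$ and $n$ to conclude $\|x\|^*_{\ell_q^p}\le(3/2)^{1/p-1/q}\|x\|_{\ell_q^p}$. The whole argument hinges on the elementary but decisive observation that enlarging an interval by a single point costs at most the factor $(3/2)^{1/p-1/q}$, extremal at the shortest such interval; once this is isolated, the bookkeeping is routine.
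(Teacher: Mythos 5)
Your proof is correct. The paper itself gives no proof of Theorem~\ref{teo1} --- it is quoted from the reference \cite{iconmaa} --- so there is nothing in the text to compare against, but your argument (identify $S_{m,N}$ with $S^*_{m-N,2N}$ for the left inequality; for the right, split on the parity of $n$ and, in the odd case, enlarge $\{k,\dots,k+n\}$ by one point to an odd-cardinality block at the multiplicative cost $\bigl(\tfrac{n+2}{n+1}\bigr)^{1/p-1/q}\le\bigl(\tfrac{3}{2}\bigr)^{1/p-1/q}$, extremal at $n=1$) is sound, yields exactly the stated constant, and is surely the intended proof.
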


Using the new norm $\|\cdot\|^*_{\ell^p_q}$, we have the following result on discrete
Morrey spaces.

\begin{theorem} \label{teo2} \cite{iconmaa}
For $1\le p\le q<\infty,$ the space $(\ell^p_q,\|\cdot\|^*_{\ell^p_q})$ may be embedded 
into the space $\mathcal{M}^p_q$ via the mapping (\ref{embeding}), where
$$
\|x\|^*_{\ell_q^p}\le\|\widetilde{x}\|_{\mathcal{M}_q^p}\le C_{p,q}\|x\|^*_{\ell_q^p},
$$
with $C_{p,q}=2^{1/p-1/q}$.
\end{theorem}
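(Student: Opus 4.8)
The plan is to establish the two inequalities separately; the lower bound is immediate, while the upper bound with the sharp constant $2^{1/p-1/q}$ carries all of the difficulty. For $\|x\|^*_{\ell^p_q}\le\|\widetilde{x}\|_{\mathcal{M}^p_q}$, I would use that (\ref{embeding}) makes $\widetilde{x}$ constant, equal to $x_j$, on each cell $[j,j+1)$. Given a block $S^*_{k,n}=\{k,\dots,k+n\}$, I would test the Morrey norm on the single ball $B(a,r)$ with $a=k+\tfrac{n+1}{2}$ and $r=\tfrac{n+1}{2}$, so that $B(a,r)=(k,k+n+1)$ is exactly the union of the cells indexed by $S^*_{k,n}$, has measure $2r=n+1=|S^*_{k,n}|$, and satisfies $\int_{B(a,r)}|\widetilde{x}|^p=\sum_{j\in S^*_{k,n}}|x_j|^p$. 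The Morrey averaging expression on this ball then coincides identically with the summand in (\ref{discrete2}), and taking the supremum over $k,n$ yields the claim.

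For the upper bound I would fix an arbitrary ball $I:=B(a,r)=(a-r,a+r)$ and set $s:=|I|=2r$. Raising the target inequality to the $p$-th power, the estimate $\|\widetilde{x}\|_{\mathcal{M}^p_q}\le 2^{1/p-1/q}\|x\|^*_{\ell^p_q}$ reduces to the single pointwise bound
\[
\int_I|\widetilde{x}(t)|^p\,dt\le(2s)^{1-p/q}\bigl(\|x\|^*_{\ell^p_q}\bigr)^p .
\]
I would then write $\int_I|\widetilde{x}|^p=\sum_j|x_j|^p\ell_j$, where $\ell_j:=|I\cap[j,j+1)|\in[0,1]$ are the overlap lengths; these are nonzero on a contiguous block of $N$ cells, sum to $s$, and are fractional for at most the two cells at the ends of $I$. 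The inputs from (\ref{discrete2}) that I intend to use are $|x_j|\le\|x\|^*_{\ell^p_q}$ (blocks of size one), the adjacent-pair bound $|x_j|^p+|x_{j+1}|^p\le 2^{1-p/q}(\|x\|^*_{\ell^p_q})^p$ (blocks of size two), and the general block bound $\sum_{j\in S^*_{k,n}}|x_j|^p\le(n+1)^{1-p/q}(\|x\|^*_{\ell^p_q})^p$.

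I would prove the pointwise bound by splitting on $s$. For $s\le 1$ the trivial estimate $\int_I|\widetilde{x}|^p\le(\|x\|^*)^p\sum_j\ell_j=s(\|x\|^*)^p$ already lies below the target since $q\ge p$. For $s\ge 2$, a counting argument shows that $I$ meets $N<s+2\le 2s$ cells, so the crude block bound suffices. The delicate range is $1<s<2$, where $N\in\{2,3\}$; the only case not covered by crude counting is $N=3$ (most sharply for $1<s<\tfrac32$), where $N$ exceeds $2s$ and comparison with an equal-cardinality block loses the constant. There the two end cells are fractional with total overlap $s-1$ and the middle cell is full, so I would use the adjacent-pair bound to absorb the end contributions, arriving at an \emph{affine}-in-$s$ majorant $\bigl[(s-1)2^{1-p/q}+(2-s)\bigr](\|x\|^*)^p$. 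Since the target $(2s)^{1-p/q}$ is \emph{concave} in $s$ and dominates this affine expression at the endpoints $s=1$ and $s=2$, concavity propagates the inequality across the whole interval and closes the case.

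The main obstacle is exactly this middle range: a short ball straddling two integer boundaries collects three cells whose count can exceed $2s$, so the naive block comparison fails to deliver the sharp constant. The crux is to exploit the \emph{fractional} overlap lengths together with the size-two constraints of $\|\cdot\|^*_{\ell^p_q}$ and then to verify the affine-versus-concave comparison; this is precisely where the refined norm (\ref{discrete2}), with its even-cardinality blocks, produces the constant $2^{1/p-1/q}$ rather than the looser $5^{1/p-1/q}$ obtainable by chaining the earlier results.
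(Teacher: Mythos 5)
Your proposal is correct and takes essentially the same approach as the paper, which states the lower bound is immediate and sketches the upper bound via exactly your three-case split on the radius ($0<r\le\tfrac12$, $\tfrac12<r\le1$, $r>1$, i.e.\ $s=2r\le1$, $1<s\le2$, $s>2$). Your treatment of the delicate middle range --- absorbing the two fractional end cells with the adjacent-pair bound and then comparing the resulting affine majorant in $s$ against the concave target $(2s)^{1-p/q}$ at the endpoints $s=1$ and $s=2$ --- is a valid completion of the details the paper leaves to the cited reference.
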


As the inequality $\|x\|^*_{\ell^p_q}\le\|\widetilde{x}\|_{\mathcal{M}^p_q}$ is
immediate, the proof of the above theorem focuses on establishing the inequality 
$\|\widetilde{x}\|_{\mathcal{M}_q^p}\le C_{p,q}\|x\|^*_{\ell_q^p}$. This can be
achieved by estimating the values of $(2r)^{1/q-1/p}\bigl(\int_{a-r}^{a+r}
|\widetilde{x}(t)|^p\,dt\bigr)^{1/p}$ for the cases where $0<r\le\frac12$, 
$\frac12<r\le1$, and $r>1$, wherever $a\in\mathbb{R}$.

As a corollary of Theorems \ref{teo1} and \ref{teo2}, we have the following refinement 
of Theorem \ref{equivalence} for the space $(\ell^p_q,\|\cdot\|_{\ell^p_q})$.

\begin{cor}
For $1\le p\le q<\infty$, the space $(\ell^p_q,\|\cdot\|_{\ell^p_q})$ may be embedded 
into the space $\mathcal{M}^p_q$ via the mapping (\ref{embeding}), where
$$
\|x\|_{\ell^p_q}\le \|\tilde{x}\|_{\mathcal{M}^p_q}\le C_{p,q}\|x\|_{\ell^p_q},
$$
with $C_{p,q}=3^{1/p-1/q}$.
\end{cor}

\section{A New Example}

In this section, we shall reconfirm that, for $1\le p_1<p_2\le q<\infty$, the inclusion 
$\mathcal{M}^{p_2}_q(\mathbb{R}) \subset\mathcal{M}^{p_1}_q(\mathbb{R})$ is proper by 
constructing a simpler function of the form (\ref{embeding}) which belongs to 
$\mathcal{M}_q^{p_1}(\mathbb{R}) \setminus \mathcal{M}_q^{p_2}(\mathbb{R})$. 
We note that, in the sequence $(x_j)_{j\in\mathbb{Z}}$ defined by (\ref{seq}), the values 
of $1$ within $[0,2^{w+v}]$ have a distance of $1$, and the values of $1$ within 
$[2^{k(w+v)}-2^{k(w+v-2)},2^{k(w+v)}]$ have a distance of $2^{kw}$. There are three
parameters involved, namely $k_0, v,$ and $w$. Not only the definition 
is difficult to read, the choice of $k_0,v,w\in\mathbb{N}$ is also not intuitive. 

As an alternative, we will construct a simpler and more intuitive sequence than the 
sequence (\ref{seq}), which also confirms that the inclusion $\ell^{p_2}_q\subset 
\ell^{p_1}_q$ is proper, where these spaces are equipped with the norm 
$\|\cdot\|^*_{\ell^{p_i}_q}$ defined in (\ref{discrete2}). Precisely, 
we have the following result.

\begin{theorem} \label{teo3}
Let $1\le p_1<p_2<q<\infty$. Choose $v, w\in\mathbb{N}$ such that
$$
\frac{q}{p_2}<\frac{v}{w}<\frac{q}{p_1}.
$$
Let $S_n:=\{1+2^v+...+2^{(n-1)v}+j.2^{nw}:j=1,...,2^{n(v-w)}\}$. Define 
$x=(x_j)_{j\in\mathbb{Z}}$ with
\begin{equation}\label{new}
        x_j:=\begin{cases}
            1,\text{ if}\ j\in\{0,1\}\cup \bigcup_{n=1}^\infty S_n,\\
        0,\text{ otherwise.}
        \end{cases}
\end{equation}
Then $x\in \ell_q^{p_1}\setminus \ell_q^{p_2}$.
\end{theorem}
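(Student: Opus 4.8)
The plan is to show the two membership claims separately by working with the starred norm $\|\cdot\|^*_{\ell^{p_i}_q}$ and using the supremum over intervals $S^*_{k,n}$. First I would record the combinatorial skeleton of the sequence. The block $S_n$ consists of $2^{n(v-w)}$ ones, spaced $2^{nw}$ apart, starting near $2^{(n-1)v}$; so the block $S_n$ occupies a window of length roughly $2^{n(v-w)}\cdot 2^{nw}=2^{nv}$, lying near the point $2^{nv}$. Thus the ratio (number of ones in $S_n$)/(window length) behaves like $2^{n(v-w)}/2^{nv}=2^{-nw}$, while the number of ones in $S_n$ itself is $2^{n(v-w)}$. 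These two orders of magnitude are what the two exponent conditions $\tfrac{v}{w}>\tfrac{q}{p_2}$ and $\tfrac{v}{w}<\tfrac{q}{p_1}$ will control.

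For the membership $x\in\ell^{p_1}_q$, I would estimate $\|x\|^*_{\ell^{p_1}_q}$ by bounding, for every interval $S^*_{k,n}$, the quantity $|S^*_{k,n}|^{1/q-1/p_1}\bigl(\sum_{j\in S^*_{k,n}}|x_j|^{p_1}\bigr)^{1/p_1}$. Since the entries are $0$ or $1$, the inner sum is just the count $\#\{j\in S^*_{k,n}: x_j=1\}$, so the quantity is $|S^*_{k,n}|^{1/q}\cdot(\text{count}/|S^*_{k,n}|)^{1/p_1}$. The worst case is an interval that captures one full block $S_n$ (or a few consecutive blocks); for such an interval of length $L\approx 2^{nv}$ containing $\approx 2^{n(v-w)}$ ones, the expression is of order $(2^{nv})^{1/q}(2^{-nw})^{1/p_1}=2^{n(v/q-w/p_1)}$. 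The exponent is $n\,w(\tfrac{v}{w}\cdot\tfrac1q-\tfrac1{p_1})$, and the condition $\tfrac{v}{w}<\tfrac{q}{p_1}$ makes $\tfrac{v}{wq}<\tfrac1{p_1}$, i.e. the exponent is negative, so these contributions are uniformly bounded (indeed decay as $n\to\infty$). I would then argue that intervals straddling several blocks do no worse, by a geometric-series summation over the blocks they meet, concluding $\|x\|^*_{\ell^{p_1}_q}<\infty$.

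For the non-membership $x\notin\ell^{p_2}_q$, I would produce a sequence of test intervals witnessing that the supremum is infinite. The natural choice is to take $S^*_{k,n}$ equal to the block $S_n$ exactly, i.e. the smallest interval containing all of $S_n$: it has length $L_n\approx 2^{nv}$ and contains exactly $2^{n(v-w)}$ ones. The corresponding value is of order $2^{n(v/q-w/p_2)}=2^{n\,w(\tfrac{v}{wq}-\tfrac1{p_2})}$, and the condition $\tfrac{v}{w}>\tfrac{q}{p_2}$, i.e. $\tfrac{v}{wq}>\tfrac1{p_2}$, makes this exponent positive, so the values blow up as $n\to\infty$. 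This shows $\|x\|^*_{\ell^{p_2}_q}=\infty$.

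The main obstacle is the bookkeeping in the upper bound for $p_1$: I must show that \emph{no} choice of $k$ and $n$ (not merely the single-block windows) pushes the Morrey quotient above a finite constant. The clean way is to reduce an arbitrary interval to a union of the blocks it overlaps, estimate the contribution of each overlapped block by the per-block bound above, and sum a convergent geometric series in the block index, using that an interval of a given length $L$ can fully contain only blocks with $2^{nv}\lesssim L$ while partially meeting at most the two boundary blocks. Quantifying ``$S_n$ occupies a window near $2^{nv}$ of length $\approx 2^{nv}$'' precisely — i.e. pinning down the first and last elements of $S_n$ and verifying the blocks are well-separated so that distinct blocks sit in disjoint dyadic ranges — is the technical heart, and I would isolate it as a preliminary lemma on the geometry of the supports before running the two-sided norm estimates.
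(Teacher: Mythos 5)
Your proposal is correct and its core computation is the same as the paper's: in both arguments the decisive estimate is that a window of length about $2^{nv}$ capturing the $2^{n(v-w)}$ ones of the $n$-th block yields a Morrey quotient of order $2^{n(v/q-w/p)}$, whose exponent is positive for $p=p_2$ and negative for $p=p_1$ precisely because $\tfrac{q}{p_2}<\tfrac{v}{w}<\tfrac{q}{p_1}$; your lower-bound witnesses (the tight window around $S_n$) differ only cosmetically from the paper's choice $S^*_{0,\beta_n}$ with $\beta_n=1+2^v+\cdots+2^{nv}$. The one genuine difference is how the supremum over \emph{all} intervals is reduced to these special windows in the upper bound for $p_1$. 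The paper disposes of this in one line by observing that the gaps between consecutive ones are non-decreasing, so only intervals $S^*_{0,m}$ anchored at the origin need be considered, and then sandwiches $m$ between $\beta_n$ and $\beta_{n+1}$; this is short but leaves the monotonicity reduction essentially unproved. You instead propose a block-overlap decomposition of an arbitrary interval with a geometric-series summation over the blocks it meets — more laborious, and you correctly identify the separation lemma (first and last elements of $S_n$, disjoint dyadic ranges) as the part that must be pinned down, but once that lemma is written out your route is fully self-contained and does not rely on the "densest at the origin" heuristic. Either way the estimates you state check out (including the sub-block case, where an interval of length $\ell\,2^{nw}$ meeting $\ell$ ones of $S_n$ gives $\ell^{1/q}2^{nw(1/q-1/p_1)}$, maximized at $\ell=2^{n(v-w)}$), so what remains is execution rather than a missing idea.
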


Before we give the proof, let us first explain the choice of $v$ and $w$ in the above 
sequence. As $\frac{q}{p_2}<\frac{q}{p_1}$, we can obviously choose a rational number 
$\frac{v}{w}>1$ between the two numbers, with $v,w\in\mathbb{N}$. We note that within the 
interval $[1+2^v+...+2^{(n-1)v}+2^{nw}, 1+2^v+...+2^{(n-1)v}+2^{nv}]$ there are 
exactly $2^{n(v-w)}$ terms that take the value of $1$ and these values of $1$ have a 
distance of $2^{nw}$ from one to another, for every $n\in\mathbb{N}$. Moreover, there is
a gap of $2^{(n+1)w}$ between the last number in $S_n$ and the first number in $S_{n+1}$.
This tells us that the terms $x_j$'s which have the value of $1$ in this sequence are 
getting more and more distant from one to another as the index $j$ increases.
From the way it is constructed, this sequence is simpler and more intuitive than the 
sequence in \cite{gunawan3}.

\medskip

We now give the proof of Theorem \ref{teo3}.

\begin{proof}
Since the terms $x_j$'s having the value of $1$ in the sequence are more and more distant 
from one to another as the index $j$ increases, we only consider the sets $S^*_{0,k}$ where
$k\in\mathbb{N}_0$. Let $n\in\mathbb{N}_0$ and $1\le p<\infty$. Write
$$
\beta_n:=1+2^v+2^{2v}...+2^{nv}
$$
and
$$
\sigma_n:=\sum_{j\in S^*_{0,\beta_n}}|x_j|^p
        =1+2^{v-w}+2^{2(v-w)}+...+2^{n(v-w)}.
$$
Clearly $\beta_n>2^{nv}$ and $\sigma_n>2^{n(v-w)}$. On the other hand, we have
\begin{align*}
\beta_n=1+2^v+...+2^{(n-1)v}+2^{nv}=\frac{2^{nv}-1}{2^{v}-1}+2^{nv}<2.2^{nv},
\end{align*}
and
\begin{align*}
        \sigma_n
        =1+2^{v-w}+...+2^{(n-1)(v-w)}+2^{n(v-w)}=\frac{2^{n(v-w)}-1}{2^{v-w}-1}+2^{n(v-w)}<2.2^{n(v-w)}.
    \end{align*}
Hence, for $p=p_2$, we get
    \begin{align*}
        |S^*_{0,\beta_n}|^{\frac{1}{q}-\frac{1}{p_2}}
        \left(\sum_{j\in S^*_{0,\beta_n}}|x_j|^{p_2}\right)^{\frac{1}{p_2}}
        &> (2.2^{nv}+1)^{\frac{1}{q}-\frac{1}{p_2}}(2^{n(v-w)})^{\frac{1}{p_2}}\\
        &>(3.2^{nv})^{\frac{1}{q}-\frac{1}{p_2}}(2^{n(v-w)})^{\frac{1}{p_2}}\\
        &=3^{\frac{1}{q}-\frac{1}{p_2}} 2^{n\left(\frac{v}{q}-\frac{w}{p_2}\right)}.
    \end{align*}
By the values of $v$ and $w$ that we choose, we have $\frac{v}{q}-\frac{w}{p_2}>0$. 
As a consequence, $2^{n\left(\frac{v}{q}-\frac{w}{p_2}\right)}\to\infty$ when 
$n\to\infty$. Thus, $\|x\|_{\ell_q^{p_2}}=\infty$, which means that $x\notin \ell_q^{p_2}$.

We will now show that $x\in\ell_{q}^{p_1}$. First, we see that, for $p=p_1$ and $0\le m\le 1+2^v$ we have
$$|S^*_{0,m}|^{\frac{1}{q}-\frac{1}{p_1}}\left(\sum_{j\in S^*_{0,m}} 
|x_j|^{p_1}\right)^{\frac{1}{p_1}}\le|S^*_{0,0}|^{\frac{1}{q}-\frac{1}{p_1}}
\left(\sum_{j\in S^*_{0,1+2^v}} |x_j|^{p_1}\right)^{\frac{1}{p_1}}=1+2^{v-w}.$$
On the other hand, if $m>1+2^v$, then there exists $n\in\mathbb{N}$ such that $\beta_n\le m<\beta_{n+1}$. For such $m'$s, we observe that
\begin{align*}
    |S^*_{0,m}|^{\frac{1}{q}-\frac{1}{p_1}}\left(\sum_{j\in S^*_{0,m}} 
|x_j|^{p_1}\right)^{\frac{1}{p_1}}&\le |S^*_{0,\beta_n}|^{\frac{1}{q}-
\frac{1}{p_1}}\left(\sum_{j\in S^*_{0,\beta_{n+1}}} |x_j|^{p_1}\right)^{\frac{1}{p_1}}\\
&\le (2^{nv}+1)^{\frac{1}{q}-\frac{1}{p_1}} (2.2^{(n+1)(v-w)})^{\frac{1}{p_1}}\\
&\le 2^{\frac{1}{p_1}}(2^{nv})^{\frac{1}{q}-\frac{1}{p_1}}(2^{(n+1)(v-w)})^{\frac{1}{p_1}}\\
&=2^{\frac{1+v-w}{p_1}}2^{n\left(\frac{v}{q}-\frac{w}{p_1}\right)}\\
&\le 2^{\frac{1+v-w}{p_1}}.
\end{align*}
because $\frac{v}{q}-\frac{w}{p_1}<0$ by the choice of $v$ and $w$. By combining the two observations, we obtain $\|x\|_{\ell_{q}^{p_1}}<\infty$ or $x\in \ell_{q}^{p_1}$.
    
It follows from this together with the previous computation that $x\in \ell_{q}^{p_1} 
\setminus \ell_{q}^{p_2}$, as desired.
\end{proof}

\section{Concluding Remarks}

Speaking about discrete Morrey spaces, we have the following result on the inclusion 
properties.

\begin{theorem}\cite[Theorem 3.1]{HS}
    Let $1\le p_j\le q_j<\infty$ where $j=1,2$. Then the inclusion $\ell^{p_2}_{q_2}
    \subseteq \ell^{p_1}_{q_1}$ holds if and only if $q_2\le q_1$ and
    $\frac{p_1}{q_1}\le\frac{p_2}{q_2}$.
\end{theorem}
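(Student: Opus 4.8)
The plan is to prove both implications separately. I would first treat \emph{sufficiency}: assuming $q_2\le q_1$ and $\frac{p_1}{q_1}\le\frac{p_2}{q_2}$, I would establish the uniform estimate $\|x\|_{\ell^{p_1}_{q_1}}\le C\|x\|_{\ell^{p_2}_{q_2}}$, from which the set inclusion is immediate. Since both norms are suprema over the \emph{same} family of blocks $S_{m,N}$, it suffices to bound $|S_{m,N}|^{1/q_1-1/p_1}\bigl(\sum_{j\in S_{m,N}}|x_j|^{p_1}\bigr)^{1/p_1}$ by a constant times $\|x\|_{\ell^{p_2}_{q_2}}$ for each fixed block, and I would split into the cases $p_1\le p_2$ and $p_1>p_2$.

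When $p_1\le p_2$, Hölder's inequality on the finite set $S=S_{m,N}$ gives $\bigl(\sum_S|x_j|^{p_1}\bigr)^{1/p_1}\le|S|^{1/p_1-1/p_2}\bigl(\sum_S|x_j|^{p_2}\bigr)^{1/p_2}$; collecting exponents turns the $\ell^{p_1}_{q_1}$ block quotient into $|S|^{1/q_1-1/q_2}$ times the $\ell^{p_2}_{q_2}$ block quotient, and $q_2\le q_1$ with $|S|\ge1$ makes the extra factor at most $1$. When $p_1>p_2$, I would instead use $|x_j|^{p_1}\le\|x\|_\infty^{\,p_1-p_2}|x_j|^{p_2}$ together with the standing remark $\|x\|_\infty\le\|x\|_{\ell^{p_2}_{q_2}}$; inserting the block bound $\sum_S|x_j|^{p_2}\le|S|^{1-p_2/q_2}\|x\|_{\ell^{p_2}_{q_2}}^{p_2}$ leaves a single power $|S|^{1/q_1-p_2/(p_1q_2)}$, whose exponent is $\le0$ \emph{precisely} when $\frac{p_1}{q_1}\le\frac{p_2}{q_2}$. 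Thus each hypothesis powers exactly one case, and both yield a uniform constant.

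For \emph{necessity} I would argue contrapositively with test sequences. The inclusion of these Banach spaces is automatically continuous by the closed graph theorem (coordinatewise limits agree since the Morrey norm dominates $\|\cdot\|_\infty$), so it supplies a constant $C$ with $\|x\|_{\ell^{p_1}_{q_1}}\le C\|x\|_{\ell^{p_2}_{q_2}}$; it then suffices to break this on finitely supported sequences. I would use the single-scale family $x^{(K)}$ placing $K$ ones along the progression $0,D,2D,\dots,(K-1)D$ with spacing $D\asymp K^{t}$. A window analysis (the extremal window being the full span $[0,(K-1)D]$, and the value $1$ coming from single-point windows) yields $\|x^{(K)}\|_{\ell^{p}_{q}}\asymp\max\{1,\,K^{E(t)}\}$ with $E(t)=\tfrac1q+t\bigl(\tfrac1q-\tfrac1p\bigr)$. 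Letting $t\to0$ (i.e. $D=1$, a solid block) gives $\|x^{(K)}\|_{\ell^{p}_{q}}\asymp K^{1/q}$, so the inequality forces $1/q_1\le1/q_2$, that is $q_2\le q_1$.

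For the ratio condition I would exploit that $E(t)$ is affine and nonincreasing in $t$ with unique zero $t^*_{p,q}=\frac{p}{q-p}=g(p/q)$, where $g(s)=\frac{s}{1-s}$ is increasing; hence $t^*_{p_1,q_1}\le t^*_{p_2,q_2}$ if and only if $\frac{p_1}{q_1}\le\frac{p_2}{q_2}$. If the ratio condition failed, so $t^*_{p_1,q_1}>t^*_{p_2,q_2}$, I would choose any $t$ strictly between the two thresholds: then $E_2(t)<0$ keeps $\|x^{(K)}\|_{\ell^{p_2}_{q_2}}\asymp1$ bounded while $E_1(t)>0$ forces $\|x^{(K)}\|_{\ell^{p_1}_{q_1}}\asymp K^{E_1(t)}\to\infty$, contradicting the uniform bound; equivalently one may splice the $x^{(K)}$ at well-separated scales into a single sequence in $\ell^{p_2}_{q_2}\setminus\ell^{p_1}_{q_1}$, exactly in the spirit of Theorem~\ref{teo3}. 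I expect the main obstacle to be this necessity half: pinning down the Morrey norm of the spread-out sequence honestly (identifying the extremal window, absorbing the $|S_{m,N}|=2N+1$ parity and the rounding in $D\asymp K^{t}$ into harmless constants, and tracking the $\max\{1,\cdot\}$ crossover), since the whole characterization rests on the crossover exponent $t^*_{p,q}=p/(q-p)$ being monotone in $p/q$.
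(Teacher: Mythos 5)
Your proposal is correct, but it is considerably more self-contained than what the paper actually does: the paper does not prove this theorem at all, citing \cite{HS} for the full characterization, and only supplements it with a short remark showing that the necessity of $\frac{p_1}{q_1}\le\frac{p_2}{q_2}$ follows from the multi-scale sequence of Theorem~\ref{teo3} (with $v,w$ now chosen so that $\frac{q_1}{p_1}<\frac{v}{w}<\frac{q_2}{p_2}$, which flips the signs of the exponents $\frac{v}{q_i}-\frac{w}{p_i}$ in that proof and puts $x$ in $\ell^{p_2}_{q_2}\setminus\ell^{p_1}_{q_1}$). Your necessity argument for the ratio condition is the same mechanism in single-scale form: your spliced alternative with spacing $2^{nw}$ and $2^{n(v-w)}$ ones per block is exactly the sequence (\ref{new}), with your crossover exponent $t^*_{p,q}=p/(q-p)$ corresponding to the threshold $\frac{v}{w}=\frac{q}{p}$; the direct splicing has the advantage of contradicting the set inclusion outright, so you can dispense with the closed-graph step entirely. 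What you add beyond the paper is the other two pieces: the necessity of $q_2\le q_1$ via solid blocks ($\|x^{(K)}\|_{\ell^p_q}\asymp K^{1/q}$), and the sufficiency, where your case split is the right one --- H\"older for $p_1\le p_2$ uses only $q_2\le q_1$, while the $\|x\|_\infty\le\|x\|_{\ell^{p_2}_{q_2}}$ interpolation for $p_1>p_2$ uses only the ratio condition, and each hypothesis is automatic in the case it is not used. The only fringe detail to watch is $p=q$, where $E(t)\equiv 1/q$ has no zero and $t^*$ should be read as $+\infty$; this does not affect the argument. In short: a correct and complete proof of a statement the paper delegates to \cite{HS}, recovering the paper's partial remark as a special case.
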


We shall argue here that the second part of the necessary condition can be established
by invoking the function we constructed in the previous section. To be precise, we can
prove that if $\ell^{p_2}_{q_2} \subseteq \ell^{p_1}_{q_1}$, then $\frac{p_1}
{q_1}\le\frac{p_2}{q_2}$. Indeed, assume to the contrary that $\frac{p_1}{q_1}>
\frac{p_2}{q_2}$. Choose $v,w \in \mathbb{N}$ such that $\frac{q_1}{p_1}<\frac{v}{w}<
\frac{q_2}{p_2}$, and define the sets $S_n$'s and the sequence $x=(x_j)_{j\in\mathbb{Z}}$ 
as in (\ref{new}). Then one may observe that $x\in \ell^{p_2}_{q_2}$ but $x\notin 
\ell^{p_1}_{q_1}$. This contradicts the hypothesis that $\ell^{p_2}_{q_2} \subseteq 
\ell^{p_1}_{q_1}$.

\begin{remark} 
From the above arguement, we note that if $1\le p_j\le q_j <\infty$ where $j=1,2$, $q_2\le 
q_1$, and $\frac{p_1}{q_1}<\frac{p_2}{q_2}$, then the inclusion $\ell^{p_2}_{q_2}\subseteq 
\ell^{p_1}_{q_1}$ is proper.
\end{remark}

\end{document}